\theoremstyle{plain}
\numberwithin{equation}{section}
\newtheorem{theorem}{Theorem}[section]
\newtheorem{lemma}[theorem]{Lemma}
\newtheorem{proposition}[theorem]{Proposition}
\newtheorem{corollary}[theorem]{Corollary}
\theoremstyle{definition}
\newtheorem *{Lmt}{Lmt}
\newtheorem *{Theorem A}{Theorem A}
\newtheorem *{Theorem B}{Theorem B}
\newtheorem *{Theorem C}{Theorem C}
\newtheorem *{Theorem D}{Theorem D}
\newtheorem *{Corollary B}{Corollary B}
\newtheorem *{Problem A}{Problem A}
\newtheorem *{Problem B}{Problem B}
\newtheorem *{Problem 1}{Problem 1}
\newtheorem *{Problem 2}{Problem 2}
\newtheorem{remark}[theorem]{Remark}
\newcommand{\Ext}{\text{\rm Ext}}
\newcommand{\ach}{\check{A}}
\newcommand{\ben}{\begin{enumerate}}
\newcommand{\een}{\end{enumerate}}
\newcommand{\C}{{\mathbb C}}
\begin{document}

\title
[]
{On groups of $I$-type and involutive Yang-Baxter groups}

\author{Nir Ben David}

\address{Department of Mathematics, Technion-Israel Institute of Technology, Haifa 32000, Israel}
\email{benda@tx.technion.ac.il}

\author{Yuval Ginosar}

\address{Department of Mathematics, University of Haifa, Haifa 3498838, Israel}
\email{ginosar@math.haifa.ac.il}

\date{\today}

\keywords{}

\begin{abstract}
We suggest a cohomological framework to describe groups of
$I$-type and involutive Yang-Baxter groups. These groups are key in the study of involutive
non-degenerate set-theoretic solutions of the quantum Yang-Baxter equation.
Our main tool is a lifting criterion for 1-cocycles, established here in a general non-abelian setting.
\end{abstract}

\maketitle

\section{Introduction}
Two families of solvable groups concern us herein.
Groups of $I$-type (or {\it structure groups}) 
were introduced in \cite{ess,GIVdB98} in order to study set-theoretic solutions of the celebrated quantum
Yang-Baxter equation \cite{Y}.
A group is of $I$-type if it carries an {$I$-datum},
i.e. a bijective 1-cocycle whose values lie
in a free abelian group endowed with a permutation action (see the precise definitions in \S\ref{dr}).
A group may admit various $I$-data, and consequently may be of $I$-type in more than one way.
A group of $I$-type has an associated finite quotient which carries an associated
$I$-datum. Such quotients, namely
involutive Yang-Baxter (IYB) groups, are
exactly the adjoint groups of {\it braces} \cite{R05,R07}.
A consequence of the above bijectivity property is that groups of $I$-type, as well as IYB groups are solvable
\cite[Theorem 2.15]{ess}.

The reader is referred to \cite{CJdR} for a thorough survey of the one-to-one correspondence between
involutive non-degenerate set-theoretic solutions of the quantum Yang-Baxter equation and groups of $I$-type.
More details can be found in \cite{bc,CJO,CJO14,GIVdB06,JO05,JO07,LYZ}.

Two problems were posed in \cite{CJdR} in attempt to characterize the family of
groups of $I$-type and by that to describe all involutive
non-degenerate set-theoretic solutions of the quantum Yang-Baxter
equation:
\begin{Problem A}
Classify the IYB groups. In particular, is every
finite solvable group an IYB group?\\
\end{Problem A}
\begin{Problem B} Describe all $I$-data of groups $G$ of $I$-type which
``lie above" a given $I$-datum of an IYB group $G_0$.
\end{Problem B}

Leaning on an idea of W. Rump \cite[\S 12]{R13}, D. Bachiller has recently disproved
the conjecture in Problem A, by presenting a finite nilpotent group which is not IYB \cite{B}.
The classification problem is still challenging.

Also recently, D. Bachiller and F. Ced\'{o} have solved important cases of Problem B
applying braces techniques \cite{bc}.

This note suggests a cohomological approach to tackle both
problems. Lemma \ref{lift} gives a criterion for lifting 1-cocycles
from a quotient of a group to the group itself.
Using the correspondence in this lemma, Theorem \ref{B} describes all groups $G$ of $I$-type
with $I$-data that lie above a given $I$-datum on their associated IYB group $G_0$.
This description is given in terms of $G_0$-module extensions.

As for Problem A, the subfamily of IYB groups established in \cite{CJdR} contains, not merely however,
finite nilpotents of
class 2, abelian-by-cyclic groups and cyclic-by-two generated abelian $p$-groups. Furthermore, it is shown
that any finite solvable group can be embedded in an IYB group, and that
the family of IYB groups is closed to Hall subgroups, to direct products and to wreath products.
Our method can retrieve some of the above families as explained in \S\ref{IYBfam}.

\section{Definitions}\label{dr}
We adopt the definition of groups of $I$-type given in \cite{ess}.
In order to compute the corresponding set-theoretic solutions, it is more convenient to work with their
group presentation \cite[\S 1]{CJdR}.
Let $\mathbb{Z}^n$ be a free abelian group of rank $n$ endowed with the natural action of the symmetric group $S_n$
on a given set of generators.
Then by the definition of the corresponding semidirect product $\mathbb{Z}^n\rtimes S_n$, the natural projection
$$ \begin{array} {ccl}
\mathbb{Z}^n\rtimes S_n & \to & \mathbb{Z}^n\\
(t,\sigma) & \mapsto & t,
\end{array}t\in\mathbb{Z}^n, \sigma\in S_n$$
satisfies the 1-cocycle condition, where $\mathbb{Z}^n$ is a $\mathbb{Z}^n\rtimes S_n$-module via the quotient $S_n$.
A subgroup $G<\mathbb{Z}^n\rtimes S_n$ is of {\it I-type} if the restriction
\begin{eqnarray} \label{projcoc}
\pi:G\to \mathbb{Z}^n
\end{eqnarray} of the above 1-cocycle to $G$ is bijective.
In other words,
$$G=\{(a,\Phi(a))| a\in \mathbb{Z}^n\}$$
for some map $\Phi:\mathbb{Z}^n\rightarrow S_n$.
We call the triple $(G,\mathbb{Z}^n,\pi)$ an {\it ($n$-fold) $I$-datum}
\footnote{this datum, together with the $G$-module structure on $\mathbb{Z}^n$,
is denoted a {\it bijective cocycle quadruple} in \cite{ess}} on the group $G$.
It turns out that a 1-cocycle is bijective if and
only if so are all the 1-cocycles in its cohomology class
(\cite[\S 1.1]{b}, see also \cite[Proposition 4.1]{bg}).
The fact that bijectivity is a class property is respected
by the cohomological structures in \S\ref{l1coc} and \S\ref{exploit}.

Fix an $I$-datum $(G,\mathbb{Z}^n,\pi)$. Let $K$ be the kernel of the action of a group $G$ of $I$-type on $\mathbb{Z}^n$.
Then certainly $K$ is of finite index in $G$, and the restriction of $\pi$ to $K$ is a group-isomorphism.
Consequently, the finite group
$$G_0:=G/K (\hookrightarrow S_n)$$
acts on
$$A:=\mathbb{Z}^n/\pi(K),$$
and the 1-cocycle $\pi$ determines a 1-cocycle
 \begin{eqnarray} \label{defpi0}
 \begin{array} {rcl}
\pi_0:G_0 & \to & A\\
gK & \mapsto & \pi(g)+\pi(K)
\end{array}
\end{eqnarray}
which is bijective as well.
The finite group $G_0$ is termed {\it involutive Yang-Baxter},
and the triple $(G_0,A,\pi_0)$ is the {\it associated $I$-datum} with respect to the given $I$-datum $(G,\mathbb{Z}^n,\pi)$.
It has already been noticed \cite[Theorem 2.1]{CJdR} that
a bijective 1-cocycle $\pi_0\in Z^1(G_0,A)$ from any finite group $G_0$ to
a $G_0$-module $A$ (of the same cardinality) is always associated to
some $I$-datum $(G,\mathbb{Z}^n,\pi)$. Then an I-datum is also sufficient for a finite group to be IYB.
Note that other choice of a 1-cocycle cohomologous to $\pi$ in \eqref{projcoc} yields, in turn,
a 1-cocycle cohomologous to $\pi_0$ in \eqref{defpi0}.

We remark that $I$-data $(G_0,A,\pi_0)$ on a finite group $G_0$ were used to construct
{\it non-degenerate} classes in $H^2(\mathcal{G},\C^*)$
for the semi-direct product $\mathcal{G}=\ach\rtimes G_0$ \cite{eg2,eg3} or,
more generally, for any extension
$$1\rightarrow \ach\rightarrow\mathcal{G}\rightarrow G_0\rightarrow 1:[\beta]\in H^2(G_0,\ach)$$
such that $[\beta]\cup [\pi_0]=0\in H^3(G_0,\C^*)$ \cite{bg}.

\section{Lifting 1-cocycles}\label{l1coc}
The main endeavor throughout this paper is a construction of cohomology classes on groups
that lift given classes on their quotients. To do so in a general non-abelian setting,
we implement the terminology of
\cite[Chapter VII, Appendix]{s}.

Let
\begin{equation}\label{extG}
1\to G_1\to G\xrightarrow{} G_0\to 1
\end{equation}
be an extension of groups, and let
\begin{equation}\label{extgamma}
1\to \Gamma_1\xrightarrow{\iota} \Gamma\xrightarrow{} \Gamma_0\to 1
\end{equation}
be an extension of (non-abelian) $G$-groups via the quotient $G_0=G/G_1$.

Under this general setup, 1-cocycles of $G$ and $G_0$ over the non-abelian modules $\Gamma,\Gamma_1,\Gamma_0$
can still be defined. We shall also work with the well-defined pointed set $H^1(G_0,\Gamma_0)$,
which is identified with the well known cohomology group in case $\Gamma_0$ is abelian \cite[page 123]{s}.

With the above notation, let $\pi:G\to \Gamma$ be a generalized 1-cocycle in $Z^1(G,\Gamma)$ such that
$\pi(G_1)\subset \Gamma_1$.
The corresponding restriction $\pi_1:G_1\to \Gamma_1$ is a group-homomorphism (since the $G_1$-action is trivial).
Next, $\pi$ determines a well defined map
$$ \begin{array} {rcl}
\pi_0:G_0 & \to & \Gamma_0\\
gG_1 & \mapsto & \pi(g)\Gamma_1,
\end{array}$$
which is a generalized 1-cocycle in $Z^1(G_0,\Gamma_0)$ as can easily be shown.
We say that the 1-cocycle $\pi$ {\it lifts} the pair $(\pi_1,\pi_0)$.

We focus on the special case where $\Gamma_1$ is central in $\Gamma$.
Under this assumption, it is not hard to verify that $\pi_1$ is a $G$-invariant morphism,
that is for every $g\in G$ and $n\in G_1$
$$\pi_1(n)=g(\pi_1(g^{-1}ng)). $$
It turns out that the invariant morphism $\pi_1\in$Hom$(G_1,\Gamma_1)^G$ and the generalized 1-cocycle
$\pi_0\in Z^1(G_0,\Gamma_0)$
(or, more precisely, its class) share a common image under two distinct cohomological maps as follows.
Let
$$\text{ Tra : Hom}(G_1,\Gamma_1)^G\to H^2(G_0,\Gamma_1)$$
be the classical transgression map (see \eqref{deftra} herein), and let
$$\Delta:H^1(G_0,\Gamma_0)\to H^2(G_0,\Gamma_1)$$
be the coboundary map (of pointed sets, see \eqref{defcob} herein).
We have the following necessary and sufficient lifting criterion.
\begin{lemma}\label{lift}
Let \eqref{extG} be an exact sequence of groups and let \eqref{extgamma} be a central exact sequence of
(non-abelian) $G$-modules via its quotient $G_0$.
Let $\pi_1\in$ Hom$(G_1,\Gamma_1)^G$ and $\pi_0\in Z^1(G_0,\Gamma_0)$.
Then there exists a 1-cocycle $\pi\in Z^1(G,\Gamma)$
which lifts the pair $(\pi_1,\pi_0)$ if and only if
\begin{equation}\label{tradelta}
\text{Tra}(\pi_1)^{-1}=\Delta([\pi_0]).
\end{equation}
\end{lemma}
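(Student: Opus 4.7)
The plan is to parametrize prospective lifts $\pi$ by their values on a set of coset representatives, reducing the existence question to a single 2-cocycle identity in $\Gamma_1$.

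Fix a set-theoretic section $s:G_0\to G$ of \eqref{extG} with $s(1)=1$, and let $\tau:G_0\times G_0\to G_1$ be its factor set, defined by $s(g_0)s(h_0)=\tau(g_0,h_0)\,s(g_0h_0)$. Every $g\in G$ has a unique expression $g = n\,s(g_0)$ with $n\in G_1$, so any map $\pi:G\to\Gamma$ with $\pi|_{G_1}=\pi_1$ is determined by $\tilde\pi_0(g_0):=\pi(s(g_0))\in\Gamma$ via $\pi(n\,s(g_0))=\pi_1(n)\,\tilde\pi_0(g_0)$, using that $G_1$ acts trivially on $\Gamma$. Moreover, such a $\pi$ reduces modulo $\Gamma_1$ to $\pi_0$ precisely when $\tilde\pi_0$ is a set-theoretic lift of $\pi_0$ to $\Gamma$. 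Expanding the 1-cocycle equation $\pi(xy)=\pi(x)\cdot x\pi(y)$ on $x=n_1s(g_0)$ and $y=n_2s(h_0)$, and invoking (a) the centrality of $\Gamma_1$ in $\Gamma$ and (b) the $G$-invariance $\pi_1(s(g_0)\,n\,s(g_0)^{-1})=g_0\pi_1(n)$ stated before the lemma, I expect the full cocycle condition to collapse to the single identity
\begin{equation}\label{keylift}
\tilde\pi_0(g_0)\cdot g_0\tilde\pi_0(h_0)\cdot\tilde\pi_0(g_0h_0)^{-1}=\pi_1(\tau(g_0,h_0)),\qquad g_0,h_0\in G_0.
\end{equation}
Hence lifts $\pi\in Z^1(G,\Gamma)$ of the pair $(\pi_1,\pi_0)$ correspond bijectively to set-theoretic lifts $\tilde\pi_0$ of $\pi_0$ satisfying \eqref{keylift}.

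The two sides of \eqref{keylift} are then identified cohomologically. For any set-theoretic lift $\tilde\pi_0$ of $\pi_0$, the left-hand side lies in $\Gamma_1$ (because $\pi_0$ is a 1-cocycle into $\Gamma_0$) and represents $\Delta([\pi_0])\in H^2(G_0,\Gamma_1)$ by the standard construction of the connecting map of pointed sets. For the right-hand side, extending $\pi_1$ to $f:G\to\Gamma_1$ by $f(n\,s(g_0)):=\pi_1(n)$ and evaluating $(\delta f)(s(g_0),s(h_0))$ using the $G$-invariance of $\pi_1$ yields $\pi_1(\tau(g_0,h_0))^{-1}$, which represents $\text{Tra}(\pi_1)$ by the usual Hochschild--Serre recipe; so the right-hand side of \eqref{keylift} represents $\text{Tra}(\pi_1)^{-1}$.

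With these identifications, both implications fall out. For necessity, a lift $\pi$ produces a $\tilde\pi_0$ satisfying \eqref{keylift} identically, and passing to classes gives $\Delta([\pi_0])=\text{Tra}(\pi_1)^{-1}$. For sufficiency, assuming $\Delta([\pi_0])=\text{Tra}(\pi_1)^{-1}$, choose any set-theoretic lift $\tilde\pi_0$ of $\pi_0$, with defect $c(g_0,h_0)$ given by the left-hand side of \eqref{keylift}. Since $[c]=\Delta([\pi_0])=\text{Tra}(\pi_1)^{-1}=[\pi_1\circ\tau]$, there is some $u:G_0\to\Gamma_1$ with $\delta u=c^{-1}\cdot\pi_1(\tau(\cdot,\cdot))$. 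Centrality of $\Gamma_1$ shows that replacing $\tilde\pi_0$ by $u\cdot\tilde\pi_0$ modifies the defect by exactly $\delta u$; after this adjustment \eqref{keylift} holds on the nose, and $\pi(n\,s(g_0)):=\pi_1(n)\,\tilde\pi_0(g_0)$ is the desired 1-cocycle. The main bookkeeping hurdle is the reduction to \eqref{keylift}: the cocycle condition involves three intermingled sources of non-commutativity (the non-abelian $\Gamma$, the $G$-action on $\Gamma$, and the possibly non-split extension \eqref{extG}), so rearranging factors to extract this single identity requires systematic use of the centrality of $\Gamma_1$ together with the stated $G$-invariance of $\pi_1$. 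Once \eqref{keylift} is in hand, the cohomological translation and both implications are routine.
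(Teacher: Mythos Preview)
Your proposal is correct and follows essentially the same approach as the paper: both fix a section of $G_0$ in $G$, parametrize candidate lifts by their values on coset representatives, and reduce the cocycle condition to a single $\Gamma_1$-valued identity equating (a representative of) $\Delta([\pi_0])$ with $\pi_1$ composed with the factor set. The only cosmetic difference is that the paper fixes a section $\overline{\pi_0(g)}$ of $\Gamma_0$ in $\Gamma$ and writes the lift as $\pi_1(n)\cdot\lambda(g)\cdot\overline{\pi_0(g)}$ with a separate $\Gamma_1$-valued correction $\lambda$, whereas you absorb $\lambda$ into your $\tilde\pi_0$; your key identity \eqref{keylift} is then exactly the paper's equation \eqref{lambda} after this substitution.
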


\begin{proof}
(1)
Let $\{\overline{g}\}_{g\in G_0}$ and $\{\overline{\gamma}\}_{\gamma\in \Gamma_0}$ be transversal sets of $G_0$ in $G$
and of $\Gamma_0$ in $\Gamma$ respectively.
These sections determine the 2-place functions
\begin{eqnarray}\label{beta}\begin{array}{rcl}
\beta:G_0\times G_0 & \to & G_1\\
(g_1,g_2)& \mapsto & \overline{g_1}\cdot \overline{g_2}\cdot(\overline{g_1\cdot g_2})^{-1}
\end{array}\end{eqnarray}
and
\begin{eqnarray}\label{omega}\begin{array}{rcl}
\omega:\Gamma_0\times \Gamma_0 & \to & \Gamma_1\\
(\gamma_1,\gamma_2)& \mapsto & \overline{\gamma_1}\cdot \overline{\gamma_2}\cdot(\overline{\gamma_1\cdot \gamma_2})^{-1}.
\end{array}\end{eqnarray}

With this notation, the transgression map is given by
\begin{equation}\label{deftra}
\text{Tra}(\pi_1)=[\pi_1\circ \beta^{-1}]\in H^2(G_0,\Gamma_1),
\end{equation}
where (see \cite[\S 1.1]{K})
$$(\pi_1\circ \beta^{-1})(g_1,g_2):=\pi_1(\beta(g_1,g_2))^{-1}.$$
The coboundary map is given by
\begin{equation}\label{defcob}
\Delta([\pi_0])=[\omega\circ \pi_0]\in H^2(G_0,\Gamma_1),
\end{equation}
where (see \cite[page 124]{s})
$$(\omega\circ \pi_0)(g_1,g_2)=
\overline{\pi_0(g_1)}\cdot {g_1}(\overline{\pi_0(g_2)})\cdot(\overline{\pi_0(g_1\cdot g_2)})^{-1}.$$

Suppose that \eqref{tradelta} holds. Then there exists $\lambda:G_0\to \Gamma_1$
(a 1-coboundary) such that for every $g_1,g_2\in G_0$
\begin{equation}\label{lambda}(\pi_1\circ \beta)(g_1,g_2)\cdot\lambda(g_1\cdot g_2)=
(\omega\circ \pi_0)(g_1,g_2)\cdot\lambda(g_1)\cdot {g_1}(\lambda(g_2)).
\end{equation}
We claim that
$$\begin{array}{rcl}
\pi:G & \to &\Gamma\\
n\cdot\overline{g}& \mapsto &\pi_1(n)\cdot \lambda(g)\cdot \overline{\pi_0(g)},\ \ n\in G_1, g\in G_0
\end{array}$$
is a 1-cocycle (which clearly lifts the pair $(\pi_1,\pi_0)$).
Indeed, for any $n_1\cdot\overline{g_1},n_2\cdot\overline{g_2}\in G$ we have
$$\begin{array}{l}\label{arr}
\pi(n_1\cdot\overline{g_1}\cdot n_2\cdot\overline{g_2})=
\pi(n_1\cdot\overline{g_1}(n_2)\cdot\overline{g_1}\cdot\overline{g_2})=
\pi(n_1\cdot\overline{g_1}(n_2)\cdot\beta(g_1,g_2)\cdot\overline{{g_1}\cdot g_2})=\\
\pi_1(n_1\cdot\overline{g_1}(n_2)\cdot\beta(g_1,g_2))\cdot \lambda({g_1}\cdot g_2)
\cdot\overline{\pi_0({g_1}\cdot g_2})=\text{( by \eqref{lambda})}\\
=\pi_1(n_1)\cdot
(\omega\circ \pi_0)(g_1,g_2)\cdot\lambda(g_1)\cdot {g_1}(\lambda(g_2))\cdot\pi_1(\overline{g_1}(n_2))
\cdot\overline{\pi_0({g_1}\cdot g_2})=\\
\pi_1(n_1)\cdot
\overline{\pi_0(g_1)}\cdot {g_1}(\overline{\pi_0(g_2)})\cdot(\overline{\pi_0(g_1\cdot g_2)})^{-1}
\cdot\lambda(g_1)\cdot {g_1}(\lambda(g_2))\cdot\pi_1(\overline{g_1}(n_2))
\cdot\overline{\pi_0({g_1}\cdot g_2})=\\
\pi_1(n_1)\cdot\lambda(g_1)\cdot\overline{\pi_0(g_1)}
\cdot g_1(\pi_1(n_2)\cdot\lambda(g_2)\cdot\overline{\pi_0(g_2)})
=\pi(n_1\cdot\overline{g_1})\cdot g_1(\pi(n_2\cdot\overline{g_2})).
\end{array}$$

Conversely, suppose that $\pi\in Z^1(G,\Gamma)$ is a 1-cocycle which lifts the pair $(\pi_1,\pi_0)$.
Define
$$\begin{array}{rcl}
\lambda:G_0&\to &\Gamma_1\\
g& \mapsto & \overline{\pi_0(g)}\cdot \pi(\overline{g})^{-1}.
\end{array}$$
Then for every $g_1,g_2\in G_0$

$$\begin{array}{c}(\omega\circ \pi_0)(g_1,g_2)=
\overline{\pi_0(g_1)}\cdot {g_1}(\overline{\pi_0(g_2)})\cdot(\overline{\pi_0(g_1\cdot g_2)})^{-1}=\\
\lambda(g_1)\cdot\pi(\overline{g_1})\cdot g_1(\lambda(g_2)\cdot\pi(\overline{g_2}))
\cdot(\lambda(g_1\cdot g_2)\cdot\pi(\overline{g_1\cdot g_2}))^{-1}=\\
\lambda(g_1)\cdot\pi(\overline{g_1})\cdot g_1(\lambda(g_2)\cdot\pi(\overline{g_2}))
\cdot(\lambda(g_1\cdot g_2)\cdot\pi(\beta(g_1,g_2)^{-1}\cdot\overline{g_1}\cdot \overline{g_2}))^{-1}=\\
\lambda(g_1)\cdot g_1(\lambda(g_2))\cdot\lambda(g_1\cdot g_2)^{-1}\cdot\pi_1(\beta(g_1,g_2)).
\end{array}$$
This proves that $\omega\circ \pi_0$ and $\pi_1\circ\beta$ are cohomologous in $Z^2(G_0,\Gamma_1)$.
Their respective cohomology classes, $\Delta([\pi_0])$ and $\text{Tra}(\pi_1)^{-1}$, are hence equal.
\end{proof}
\begin{remark}\label{uniqueupto}
Under the assumptions of Lemma \ref{lift},
suppose that both $\pi,\pi'\in Z^1(G,\Gamma)$ lift the pair $(\pi_1,\pi_0)$. Define
$$ \begin{array} {rcl}
\pi'':G_0 & \to & \Gamma_1\\
g& \mapsto & \pi'(\overline{g})^{-1}\cdot\pi(\overline{g}).
\end{array}$$
Then the 1-cocycle conditions on $\pi$ and $\pi'$ entail a 1-cocycle condition on $\pi''$.
Moreover, for every $n\in G_1$ and ${g}\in G_0$
$$\pi(n\overline{g})=\pi'(n\overline{g})\cdot\pi''(g).$$
Consequently, a lifting of the pair $(\pi_1,\pi_0)$ is determined up to $\iota^*\circ\inf_G^{G_0}\pi''$
for some $\pi''\in Z^1(G_0,\Gamma_1)$, where $\iota^*$ is the functorial map arising from the embedding
$\Gamma_1\xrightarrow{\iota} \Gamma$ and $\inf_G^{G_0}:Z^1(G_0,\Gamma_1)\to Z^1(G,\Gamma_1)$
is the inflation map.
\end{remark}
\begin{remark}
The criterion \eqref{tradelta} is significant in the theory of lifting
projective representations of $G_0$ to ordinary representations of
$G$ over a field $\mathbb{F}$. Here one puts
$\Gamma:=\text{GL}_n(\mathbb{F})$, and $\Gamma_1:=Z(\text{GL}_n(\mathbb{F}))$ -
the scalar matrices (and so
$\Gamma_0=\Gamma/\Gamma_1=\text{PGL}_n(\mathbb{F})$), endowed with the
trivial $G$-action \cite[Theorem 11.13]{I}.
\end{remark}

\section{Application: Lifting {\it I}-data}\label{exploit}
To exploit Lemma \ref{lift} for our purpose of lifting bijective cocycles, assume both
\begin{enumerate}
\item The extension \eqref{extgamma} is of abelian $G$-modules (via $G_0$), and
\item $G_1=\Gamma_1$ (with the same $G_0$-action), and $\pi_1=$Id$_{G_1}$ is the identity map.
\end{enumerate}
Since by these assumptions the kernel $G_1$ in \eqref{extG} is abelian,
the 2-place function $\beta$ given in \eqref{beta} is a 2-cocycle. That is
\begin{equation}\label{beta1}
[\beta]\in H^2(G_0,G_1)\simeq\text{Ext}^2_{G_0}(\mathbb{Z},G_1).
\end{equation}
The first assumption above says that the extension \eqref{extgamma} determines an element in
$\text{Ext}^1_{G_0}(\Gamma_0,\Gamma_1)$.
By the second assumption, the 2-place function $\omega$ in \eqref{omega} represents a class
$$[\omega]\in\text{Ext}^1_{G_0}(\Gamma_0,G_1).$$
We also have
$$[\pi_0]\in H^1(G_0,\Gamma_0)\simeq\text{Ext}^1_{G_0}(\mathbb{Z},\Gamma_0).$$
Under the above assumptions, the coboundary map $\Delta$ can be identified with the Yoneda splicing \cite[\S 2.6]{b}
of $G_0$-module extensions
\begin{eqnarray}\label{yoneda}
\begin{array}{rcl}
\Delta:\text{Ext}^1_{G_0}(\mathbb{Z},\Gamma_0)&\to &\text{Ext}^2_{G_0}(\mathbb{Z},G_1)\\
{[\pi_0]}
& \mapsto & [\omega] \circ [\pi_0].
\end{array}
\end{eqnarray}
Next, substitution of the identity map Id$_{G_1}$ for $\pi_1$ in \eqref{deftra} yields
\begin{equation}\label{traid}
\text{Tra}(\pi_1)=\text{Tra(Id}_{G_1})=[\text{Id}_{G_1}\circ\beta^{-1}]=[\beta^{-1}].
\end{equation}
We have
\begin{corollary}\label{corlift}
Let \eqref{extG} be a group extension with abelian kernel $G_1$ determined by the class \eqref{beta1}, let
\begin{equation}\label{omegaext}
0\to G_1\to \Gamma\xrightarrow{} \Gamma_0\to 0:[\omega]\in\text{Ext}^1_{G_0}(\Gamma_0,G_1)
\end{equation}
be an exact sequence of abelian
$G$-modules via its quotient $G_0$, and let $\pi_0\in Z^1(G_0,\Gamma_0)$.
Then there exists a 1-cocycle $\pi\in Z^1(G,\Gamma)$
which lifts the pair $(\text{Id}_{G_1},\pi_0)$ if and only if
\begin{equation}\label{tradelta1}
[\beta]= [\omega]\circ [\pi_0]\in H^2(G_0,G_1).
\end{equation}
Moreover, $\pi$ is bijective if and only if so is $\pi_0$.
\end{corollary}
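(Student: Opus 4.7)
The corollary is a direct specialization of Lemma \ref{lift} combined with a small bijectivity observation. First one verifies that the hypotheses of Lemma \ref{lift} are satisfied: since $\Gamma$ is abelian, $\Gamma_1 = G_1$ is automatically central in $\Gamma$, and $\mathrm{Id}_{G_1}$ lies in $\mathrm{Hom}(G_1,\Gamma_1)^G$ because the $G$-action on $G_1$ factors through $G_0$ and, by assumptions (1)--(2), agrees with the $G_0$-action on $\Gamma_1$. Lemma \ref{lift} then produces a 1-cocycle $\pi\in Z^1(G,\Gamma)$ lifting $(\mathrm{Id}_{G_1},\pi_0)$ if and only if $\mathrm{Tra}(\mathrm{Id}_{G_1})^{-1}=\Delta([\pi_0])$ in $H^2(G_0,G_1)$.

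The next step is to rewrite this criterion as \eqref{tradelta1}. Equation \eqref{traid} records $\mathrm{Tra}(\mathrm{Id}_{G_1}) = [\beta^{-1}]$, and inversion in the abelian group $H^2(G_0,G_1)$ gives $\mathrm{Tra}(\mathrm{Id}_{G_1})^{-1}=[\beta]$. The identification \eqref{yoneda} of $\Delta$ with Yoneda splicing by $[\omega]$ yields $\Delta([\pi_0]) = [\pi_0]\circ[\omega]$. Plugging these in recovers the criterion $[\beta] = [\pi_0]\circ[\omega]$ claimed in \eqref{tradelta1}. This part has no real obstacle; it is bookkeeping around two identifications already established.

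For the ``moreover'' bijectivity clause, fix sections $g\mapsto\overline{g}$ of $G_0$ in $G$ and $\gamma\mapsto\overline{\gamma}$ of $\Gamma_0$ in $\Gamma$, so that every element of $G$ (respectively $\Gamma$) is uniquely of the form $n\cdot\overline{g}$ (respectively $m\cdot\overline{\gamma}$) with $n,m\in G_1$. The explicit lift constructed in the proof of Lemma \ref{lift} has the form
$$\pi(n\cdot\overline{g}) = n\cdot\lambda(g)\cdot\overline{\pi_0(g)}, \qquad n\in G_1,\ g\in G_0,$$
for some $\lambda:G_0\to G_1$. Reducing modulo $G_1$, the image $\pi(n\cdot\overline{g})$ lies in the coset represented by $\pi_0(g)$, and inside that coset the $G_1$-component recovers $n\cdot\lambda(g)$, hence $n$. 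Thus $\pi(n_1\cdot\overline{g_1}) = \pi(n_2\cdot\overline{g_2})$ forces first $\pi_0(g_1)=\pi_0(g_2)$ and then $n_1=n_2$, while an arbitrary $m\cdot\overline{\gamma}\in\Gamma$ lies in $\pi(G)$ precisely when $\gamma\in\pi_0(G_0)$. Consequently $\pi$ is injective (respectively surjective) if and only if $\pi_0$ is, establishing the equivalence of bijectivity. The only mildly delicate point in the whole argument is confirming the $G$-invariance of $\mathrm{Id}_{G_1}$ needed to invoke Lemma \ref{lift}, which is immediate from assumption (2).
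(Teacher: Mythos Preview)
Your proof is correct and follows the same approach as the paper: specialize Lemma \ref{lift} via the identifications \eqref{yoneda} and \eqref{traid}, then check bijectivity by hand. The paper's own argument is the terse two-liner ``put \eqref{yoneda} and \eqref{traid} in Lemma \ref{lift}; the bijectivity property is verified by a direct computation,'' and you have simply written out that direct computation.

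One small remark: you phrase the bijectivity argument for ``the explicit lift constructed in the proof of Lemma \ref{lift},'' but the corollary's ``moreover'' clause is most naturally read as applying to \emph{any} lift $\pi$ of $(\mathrm{Id}_{G_1},\pi_0)$. This is harmless, since every such lift automatically has the form $\pi(n\cdot\overline{g})=n\cdot\mu(g)\cdot\overline{\pi_0(g)}$ for some $\mu:G_0\to G_1$: indeed $\pi(n\cdot\overline{g})=\pi_1(n)\cdot n(\pi(\overline{g}))=n\cdot\pi(\overline{g})$ by the cocycle condition (the $G_1$-action is trivial), and $\pi(\overline{g})$ lies over $\pi_0(g)$. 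Your injectivity/surjectivity analysis uses nothing about $\lambda$ beyond it being a function $G_0\to G_1$, so it applies verbatim with $\mu$ in place of $\lambda$.
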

\begin{proof}
The first part is obtained by putting \eqref{yoneda} and \eqref{traid} in Lemma \ref{lift}.
The bijectivity property is verified by a direct computation.
\end{proof}
Note that by Remark \ref{uniqueupto}, the lifting $\pi\in Z^1(G,\Gamma)$ in Corollary \ref{corlift}
is determined up to $\iota^*\circ\inf_G^{G_0}\pi''$
for some $\pi''\in Z^1(G_0,G_1)$, where $\iota^*$ is the functorial map arising from the embedding
$(G_1=)\Gamma_1\xrightarrow{\iota} \Gamma$.
\subsection{}\label{IYBfam}
By now it is clear how Corollary \ref{corlift} is helpful for the
construction of $I$-data on groups
using $I$-data on their quotients. Indeed,
given a bijective 1-cocycle
$\pi_0:G_0\to \Gamma_0$, then for every
extension \eqref{omegaext} of abelian $G_0$-modules, the Yoneda splicing
$[\omega] \circ [\pi_0]\in H^2(G_0,G_1)$ determines a
cover $G$ of $G_0$ and a bijective 1-cocycle $\pi\in
Z^1(G,\Gamma)$ such that $(G,\Gamma,\pi)$ is an $I$-datum ``lying above" the $I$-datum $(G_0,\Gamma_0,\pi_0)$.
The families of IYB groups given in the rest of this subsection demonstrate the technique.
The first example is a special instance of \cite[Theorem 3.4]{CJdR}.
\begin{proposition}\label{sdp}
The family of IYB groups is closed to semidirect products with finite abelian groups.
\end{proposition}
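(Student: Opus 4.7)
The plan is to apply Corollary \ref{corlift} to the split group extension
\[
1 \to H \to H\rtimes G_0 \to G_0 \to 1,
\]
where $G_0$ is a given IYB group and $H$ is any finite abelian $G_0$-module. By hypothesis there exists an $I$-datum $(G_0,\Gamma_0,\pi_0)$ in which $\Gamma_0$ is a finite abelian $G_0$-module and $\pi_0\in Z^1(G_0,\Gamma_0)$ is bijective. Set $G:=H\rtimes G_0$ and $G_1:=H$. On the module side, choose the split $G_0$-module extension
\[
0 \to H \to H\oplus\Gamma_0 \to \Gamma_0 \to 0
\]
and put $\Gamma:=H\oplus\Gamma_0$ with the direct-sum action; viewed as a $G$-module via the quotient $G\twoheadrightarrow G_0$, it has cardinality $|H|\cdot|\Gamma_0|=|H|\cdot|G_0|=|G|$, as required for a potential $I$-datum on $G$.

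Because both the group extension and the module extension are split, the corresponding classes $[\beta]\in H^2(G_0,H)$ and $[\omega]\in\mathrm{Ext}^1_{G_0}(\Gamma_0,H)$ both vanish. The Yoneda splicing $[\pi_0]\circ[\omega]$ is therefore also trivial in $H^2(G_0,H)$, so the lifting criterion
\[
[\beta]=[\pi_0]\circ[\omega]
\]
of Corollary \ref{corlift} is automatically satisfied. The corollary then supplies a 1-cocycle $\pi\in Z^1(G,\Gamma)$ lifting the pair $(\mathrm{Id}_H,\pi_0)$, and its ``moreover" clause guarantees that $\pi$ is bijective because $\pi_0$ is. Consequently $(G,\Gamma,\pi)$ is an $I$-datum on $G=H\rtimes G_0$, exhibiting it as an IYB group.

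There is no serious obstacle here: the argument is essentially the observation that splittings of both the group and the module extension force both sides of the cohomological obstruction in Corollary \ref{corlift} to vanish. This is why semidirect products with finite abelian groups serve as the cleanest illustration of the cohomological technique, and it is the only step requiring any thought beyond invoking the corollary.
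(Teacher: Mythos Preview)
Your proof is correct and follows essentially the same approach as the paper: both arguments apply Corollary~\ref{corlift} to the split group extension $1\to H\to H\rtimes G_0\to G_0\to 1$ and the split module extension $0\to H\to H\oplus\Gamma_0\to\Gamma_0\to 0$, observing that $[\beta]=0$ and $[\omega]=0$ force the lifting criterion $[\beta]=[\pi_0]\circ[\omega]$ to hold trivially. Your write-up is in fact slightly more detailed than the paper's, which also contains a minor typo (writing $G_0\oplus\Gamma_0$ where $G_1\oplus\Gamma_0$ is meant).
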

\begin{proof}
The semidirect product $G_1\rtimes G_0$ corresponds to $[\beta]=0\in H^2(G_0,G_1)$ in \eqref{beta1}.
This trivial class is obtained by splicing the cohomology class of the given bijective 1-cocycle
$\pi_0\in Z^1(G_0,\Gamma_0)$ with the trivial $G_0$-extension
$[\omega]=0\in\text{Ext}^1_{G_0}(\Gamma_0,G_1)$. By Corollary
\ref{corlift}, $G_0\rtimes G_1$ admits a bijective 1-cocycle to the direct sum of $G_0$-modules $G_0\oplus \Gamma_0$.
\end{proof}
The following result was given as a consequence of Proposition \ref{sdp} in the published version of this paper. However, it contained an error which was detected and corrected in \cite[\S 2]{CJO18}.
A finite group is said to be {\it of $A$-type} if all its Sylow subgroups are abelian \cite{T}.
\begin{theorem}\label{A-type is IYB}\cite[Theorem 2.1]{CJO18}
Solvable groups of $A$-type are IYB.
\end{theorem}

The following metabelian examples are proven to be IYB by putting $\Gamma_0:=G_0$
as a trivial $G_0$-module in Corollary \ref{corlift}, and letting $\pi_0:=$Id$_{G_0}$ (which is obviously bijective).
Since these families were already treated in \cite{CJdR}, we skip most of the details,
which can be found in \cite[\S 3.3]{bd}.

Let $G_1$ be an abelian $G_0$-module and let $G_1^{G_0}<G_1$ denote its invariant elements under the $G_0$-action.
Classes in the image of the functorial map $$H^2(G_0,G_1^{G_0})\to H^2(G_0,G_1)$$ are termed {\it invariant}. We have
\begin{proposition}\label{invar}\cite[Theorem 3.3.11]{bd}
Let $G_0$ be an abelian group acting trivially on itself and let $G_1$ be an abelian $G_0$-module. Then the map
\begin{eqnarray}\label{class2}
\begin{array}{rcl}
\text{Ext}^1_{G_0}(G_0,G_1)&\to & H^2(G_0,G_1)\\
{[\omega]}
& \mapsto & [\omega] \circ [\text{Id}_{G_0}]
\end{array}
\end{eqnarray} admits all the invariant classes in its image.
\end{proposition}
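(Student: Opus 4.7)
Given an invariant class $[\beta]\in H^2(G_0,G_1)$, the plan is to construct an extension of $G_0$-modules $\omega:0\to G_1\to\Gamma\to G_0\to 0$ whose Yoneda splicing with $[\Id_{G_0}]$ recovers $[\beta]$. By Corollary \ref{corlift}, this splicing equals the coboundary $\Delta([\Id_{G_0}])$, so it is enough to produce a $\Gamma$ for which the connecting map $\delta$ attached to the sequence sends $[\Id_{G_0}]$ to $[\beta]$.

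Since $[\beta]$ is invariant, I would first pick a representative 2-cocycle $\beta:G_0\times G_0\to G_1^{G_0}$. Because $G_0$ is abelian and acts trivially on $G_1^{G_0}$, I would then invoke the classical splitting
$$H^2(G_0,G_1^{G_0})\;\cong\;\Ext^1_{\Z}(G_0,G_1^{G_0})\;\oplus\;\Hom(\Lambda^2 G_0,G_1^{G_0}),$$
arising from the universal coefficient formula together with $H_2(G_0,\Z)\cong\Lambda^2 G_0$ for abelian $G_0$, in order to arrange, within the cohomology class of $\beta$, a decomposition $\beta=\beta^{\mathrm{sym}}+\phi$ where $\beta^{\mathrm{sym}}$ is a symmetric 2-cocycle representing the $\Ext$-summand and $\phi:G_0\times G_0\to G_1^{G_0}$ is a $\Z$-bilinear alternating form representing the $\Hom$-summand.

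Next I would define $\Gamma$ on the set $G_1\times G_0$ by
$$(m,a)+(n,b):=\bigl(m+n+\beta^{\mathrm{sym}}(a,b),\,a+b\bigr),\qquad g\cdot(m,a):=\bigl(g\cdot m+\phi(g,a),\,a\bigr).$$
Symmetry and the 2-cocycle identity of $\beta^{\mathrm{sym}}$ make the addition into an abelian group operation; $\Z$-bilinearity of $\phi$ together with the $G_0$-invariance of its values makes $g\cdot(-)$ a $G_0$-action by additive automorphisms; and the obvious inclusion $m\mapsto(m,0)$ and projection $(m,a)\mapsto a$ present $\omega$ as an extension of $G_0$-modules. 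Using the section $s(a):=(0,a)$, the standard connecting-map computation $s(a)+a\cdot s(b)-s(a+b)$ evaluates to $\bigl(\beta^{\mathrm{sym}}(a,b)+\phi(a,b),\,0\bigr)=\bigl(\beta(a,b),\,0\bigr)\in G_1$, which shows $\Delta([\Id_{G_0}])=[\beta]$, as needed.

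The main obstacle is the first step: decomposing a general invariant 2-cocycle, modulo coboundary, as a sum of a symmetric cocycle and a bilinear form. While the universal-coefficient splitting of $H^2(G_0,G_1^{G_0})$ is classical for abelian $G_0$, some care is required to ensure that the $\Hom(\Lambda^2 G_0,-)$ summand has a representative bilinear form taking values in $G_1^{G_0}$ (not merely in $G_1$), which is what keeps the prescribed $G_0$-action on $\Gamma$ well defined. Once this decomposition is in hand, the verification above reduces to the direct computation sketched, along the lines of \cite[\S 3.3]{bd}.
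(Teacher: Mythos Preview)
The paper does not actually supply a proof of this proposition: it cites \cite[Theorem~3.3.11]{bd} and says explicitly that ``we skip most of the details, which can be found in \cite[\S 3.3]{bd}.'' So there is no in-paper argument to compare your proposal against.

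That said, your proposal is a correct and natural argument. The ``main obstacle'' you flag is not genuine: once you choose a representative $\beta$ with values in $G_1^{G_0}$ (which is exactly the meaning of ``invariant'' in the paper), the universal-coefficient decomposition you invoke is taken inside $H^2(G_0,G_1^{G_0})$, so both $\beta^{\mathrm{sym}}$ and $\phi$ automatically land in $G_1^{G_0}$. With that, your $G_0$-action on $\Gamma$ is well defined (the verification uses bilinearity of $\phi$ and $G_0$-invariance of the values of $\phi$ and $\beta^{\mathrm{sym}}$), the sequence $0\to G_1\to\Gamma\to G_0\to 0$ is an extension of $G_0$-modules, and the connecting/Yoneda computation with the section $s(a)=(0,a)$ returns $\beta^{\mathrm{sym}}(a,b)+\phi(a,b)=\beta(a,b)$ as you state. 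One small bookkeeping point worth making explicit is that the equality $\beta=\beta^{\mathrm{sym}}+\phi$ holds only after replacing $\beta$ by a cohomologous cocycle; since the conclusion concerns the class $[\beta]$, this is harmless.
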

\begin{corollary}
Let \eqref{extG} be a metabelian extension determined by an invariant class $[\beta]\in H^2(G_0,G_1)$.
Then $G$ is an IYB group. In particular\\
(i) Finite nilpotent groups of class 2 are IYB (see \cite[Corollary 3.11]{CJdR}).\\
(ii) Finite abelian-by-cyclic groups are IYB (see \cite[Corollary 3.10]{CJdR}).
\end{corollary}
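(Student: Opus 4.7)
The plan is to invoke Corollary \ref{corlift} with the particular choices $\Gamma_0 := G_0$ (endowed with the trivial $G_0$-action) and $\pi_0 := \text{Id}_{G_0}$. Since $G_0$ is abelian and acts trivially on itself, the identity is a group homomorphism, hence a 1-cocycle, and it is tautologically bijective. By Proposition \ref{invar}, the invariance of $[\beta]\in H^2(G_0,G_1)$ furnishes an extension class $[\omega]\in\text{Ext}^1_{G_0}(G_0,G_1)$ with $[\beta]=[\text{Id}_{G_0}]\circ[\omega]$, which is precisely condition \eqref{tradelta1}. Corollary \ref{corlift} then produces a bijective 1-cocycle $\pi\in Z^1(G,\Gamma)$, where $\Gamma$ is the $G_0$-module extension corresponding to $[\omega]$. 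Thus $G$ admits an $I$-datum and is IYB.

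For (i), observe that a finite nilpotent group of class $2$ is metabelian via the canonical abelianization sequence $1\to [G,G]\to G\to G/[G,G]\to 1$. Since $[G,G]\subseteq Z(G)$, the induced action of $G_0:=G/[G,G]$ on $G_1:=[G,G]$ is trivial, whence $G_1^{G_0}=G_1$ and \emph{every} class in $H^2(G_0,G_1)$ is tautologically invariant. The main claim then applies.

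For (ii), the content is to verify that when $G_0$ is finite cyclic, every class in $H^2(G_0,G_1)$ is invariant. Using the standard description of cyclic cohomology, $H^2(G_0,G_1)\cong G_1^{G_0}/N(G_1)$, where $N=\sum_{i=0}^{|G_0|-1}\sigma^i$ for a generator $\sigma$ of $G_0$. Since $N(a)=|G_0|\cdot a$ for $a\in G_1^{G_0}$, one has $|G_0|\cdot G_1^{G_0}\subseteq N(G_1)$, so the functorial map
\[
H^2(G_0,G_1^{G_0})\;\cong\; G_1^{G_0}/|G_0|\cdot G_1^{G_0}\;\longrightarrow\; G_1^{G_0}/N(G_1)\;\cong\; H^2(G_0,G_1)
\]
is the obvious quotient, hence surjective. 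Every class in $H^2(G_0,G_1)$ is therefore invariant, and the main claim applies. The only step with genuine computational content is the cyclic-group calculation in (ii); part (i) and the general statement reduce almost immediately to Proposition \ref{invar} and Corollary \ref{corlift}.
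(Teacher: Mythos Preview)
Your proof is correct and follows essentially the same route as the paper: the main statement is obtained from Proposition \ref{invar} together with Corollary \ref{corlift} using $\Gamma_0=G_0$ with trivial action and $\pi_0=\text{Id}_{G_0}$, exactly as set up in the text preceding the corollary. The only cosmetic differences are that in (i) you take $G_1=[G,G]$ whereas the paper takes $G_1=Z(G)$ (both yield central, hence invariant, extensions since $[G,G]\subseteq Z(G)$), and in (ii) you spell out the cyclic-cohomology surjectivity directly rather than citing it.
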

\begin{proof}
(i) For a nilpotent group $G$ of class 2, take $G_1$ to be its center. Then the extension \eqref{extG} is
metabelian and central (in particular invariant).\\
(ii) The even dimensional cohomology of a cyclic group $G_0$ with coefficients in an abelian module
is invariant (see e.g. \cite[\S 3.5]{b}).\\
By Corollary \ref{corlift} and Proposition \ref{invar} the outcome groups $G$ in both cases are IYB.
\end{proof}

\subsection{}\label{liftoI}
We can now answer Problem B in cohomological terms.
Suppose that a finite group $G_0$ embeds into $S_n$, that is $\mathbb{Z}^n$ is a faithful
$G_0$-module under the corresponding $n$-permutation action.
Suppose further that $G_0$ admits a module $A$ with $|A|=|G_0|$.
It is not hard to check that rank$(A)<n$.
Then any $G_0$-module surjective map
\begin{equation}\label{surj}
\theta:\mathbb{Z}^n\twoheadrightarrow A
\end{equation}
with finite $G_0$-quotient module $A$ gives rise to a $G_0$-module extension
\begin{equation}\label{extmod}
0\rightarrow \mathbb{Z}^n\rightarrow\mathbb{Z}^n\xrightarrow{\theta} A
\rightarrow 0:[\gamma_{\theta}]\in \Ext^1_{G_0}(A,\mathbb{Z}^n),
\end{equation}
which we call an {\it $n$-fold permutation extension} of $G_0$-modules.
With the notation of \eqref{surj} and \eqref{extmod} we have
\begin{theorem}\label{B}
Let $(G_0,A,\pi_0)$ be an $I$-datum on an IYB group $G_0$.
Then there is a one-to-one correspondence
between groups $G$ of $I$-type, which admit an $n$-fold
$I$-datum $(G,\mathbb{Z}^n,\pi)$, whose associated $I$-datum is $(G_0,A,\pi_0)$,
and $n$-fold permutation extensions $[\gamma_{\theta}]\in \Ext^1_{G_0}(A,\mathbb{Z}^n)$ of $G_0$-modules
(arising from $G_0$-module surjective maps $\theta:\mathbb{Z}^n\twoheadrightarrow A$).
The correspondence is realized by the Yoneda splicing $[\gamma_{\theta}]\mapsto [\gamma_{\theta}]\circ[\pi_0]\in H^2(G_0,\mathbb{Z}^n)$.
\end{theorem}

{\bf Acknowledgments.} We are indebted to E. Aljadeff for his ongoing support.

\end{document}